\def \R{{\mathbb R}}
\def \1{{\mathbb 1}}
\newtheorem{Prop}{Proposition}
\newtheorem{Def}{Définition}
\newtheorem{Rem}{Remarque}
\newtheorem{Lem}{Lemme}
\newtheorem{Def Nota}{Definitions and notations} 
\newtheorem{Cor}{Corollaire}
\font\ninerm=cmr9
\long\outer\def\abstract#1{\bigskip\vbox{\noindent\ninerm
\baselineskip=10pt#1}\nobreak\bigskip}
\def\exo#1{\advance\numero by 1\bigskip
{\noindent\tenbf #1\the\numero. }}
\def\frac#1#2{{#1\over #2}}
\title{Une réflexion à partir de la Nature de Spinoza : "La substance ou la Nature comme Treillis".}
\author{Par Mohammed Bachir en hommage \`a Baruch de Spinoza.} 
\date{11/03/2018}
\begin{document}
\maketitle
\tableofcontents
\chapter{Existence et unicité de la substance}
\vskip10mm
\hspace{4cm}{\it "Ni rire, ni pleurer, ni haïr mais comprendre."

\hspace{9cm} Spinoza, {\it Traité politique.}}\\
\vskip10mm
\noindent {\bf Abstract :} We propose a simple mathematical model based on two axioms and the set theory to approach the problem developed by the philosopher B. Spinoza in {\it the Ethics}. We then use the Knaster-Tarski Theorem to prove the existence and uniqueness of the Substance asserted by Spinoza.

\vskip5mm

\noindent {\bf Résumé :} On propose un mod\`ele math\'ematique simple bas\'e sur deux axiomes et la théorie des ensembles pour  approcher la probl\'ematique d\'evelopp\'ee par le philosophe B. Spinoza, dans {\it l'Ethique}. On utilise ensuite le Th\'eor\`eme de Knaster-Tarski pour d\'emontrer l'existence et l'unicit\'e de la Substance affirm\'ees par Spinoza.

\vskip5mm 

{\it Je tiens à remercier le Professeur Daniel Parrochia pour avoir lu et commenté une première version de cette note. Ces remarques et suggestions m'ont été d'une aide précieuse.} 

\section{Introduction}

Vers 1661, dans l'\oe uvre de {\it l'Ethique} (Voir \cite{KFM}), Baruch de Spinoza commence par d\'efinir la notion de ``{\it cause de soi}'' et celle de ``{\it substance}''. Par ''{\it cause de soi}'' il entend, ce dont l'essence enveloppe l'existence, ou encore ce dont la Nature ne peut \^etre con\c cue qu'existante. Par substance, ce qui est en soi et est con\c cu par soi, autrement dit, ce dont le concept n'a pas besoin du concept d'une autre chose pour \^etre form\'e. Il affirme ensuite qu'il appartient \`a la nature de la substance d'exister (Voir les d\'efinitions I et II ainsi que  la Proposition VII dans {\it l'Ethique} \cite{KFM}). Le philosophe met en place sept axiomes sur lesquels il va s'appuyer tout le long de la premi\`ere partie de l'\oe uvre pour d\'emontrer (selon ce qu'il appelle lui m\^eme la m\'ethode g\'eom\'etrique) l'existence et l'unicit\'e d'une substance dans la Nature (Voir Proposition XI  de {\it l'Ethique}). Autrement dit, de l'existence d'une unique chose dans la Nature qui est cause de soi. Cependant, Spinoza  est amen\'e a aborder la notion de substance via ses attributs. Le seul moyen pour lui de percevoir la substance par l'entendement. Quoique le raisonnement qu'on trouve dans {\it l'Ethique} soit souvent rigoureux et l'intuition profonde, certaines preuves pr\'esentent n\'eanmoins quelques lacunes \`a certains endroits comme le d\'emontre George Boole dans son \oe uvre ``les lois de la pens\'ee'' (voir \cite{Bo}, voir aussi ''{\it La raison systématique}'' de Daniel Parrochia \cite{P}. Je prouverai dans la deuxième partie, que les démonstrations de la Proposition XI de l'Ethique données par Spinoza sont en réalité fausses). Ces difficult\'es s'expliquent \`a mon sens pour les raisons suivantes :

\vskip5mm

$1.$ La première difficulté est due au fait qu'on ne peut raisonner sur la substance que par l'intermédiaire de ses attributs, ce qui peut induire des erreurs et des confusions.

\vskip5mm

$2.$ la seconde difficult\'e est due \`a l'absence d'une th\'eorie math\'ematique des ensembles au XVIIe siècle. M\^eme si on trouve d\'ej\`a dans l'argumentation de Spinoza une distinction entre deux types d'infini : ``{\it l'infini en son genre}'' et ``{\it l'absolument infini}'', la th\'eorie des ensembles n'était pas encore constituée. Notons au passage que c'est \`a Georg Cantor que l'on doit, deux siècles plus tard vers 1874, les premières bases de la th\'eorie des ensembles. Voici un passage de {\it l'Ethique} (Voir page 324 dans \cite{KFM}) o\`u Spinoza critique ses adversaires. On y voit clairement que la th\'eorie des ensembles et la notion d'ordinal n'\'etaient pas encore au point mais que certains problèmes qu'on retrouvera plus tard constituaient déjà implicitement le fond des débats : \\

 ``{\it Si la substance corporelle, disent-ils, est infinie, qu'on la con\c coive divis\'ee en deux parties : chaque partie sera ou finie ou infinie. Si chaque partie est finie, l'infini se compose donc de deux parties finies, ce qui est absurde. Si chaque partie est infinie, il y a donc un infini deux fois plus grand qu'un autre infini, ce qui est \'egalement absurde. En outre, si une quantit\'e infinie est mesur\'ee en parties valant chacune un pied, elle devra consister en une infinit\'e de telles parties; et de m\^eme, si elle est mesur\'ee en parties valant chacune un pouce; par suite, un nombre infini sera douze fois plus grand qu'un autre nombre infini...}'' \\\\

La pr\'esente note trouve sa source d'inspiration dans l'\oe uvre de {\it l'Ethique}.  Ayant constat\'e que le raisonnement de Spinoza s'appuie en partie sur des concepts ensemblistes, je propose dans ce qui suit un mod\`ele math\'ematique simple issu de la th\'eorie des ensembles et bas\'e sur deux axiomes concernant la loi de la causalit\'e (en dehors des axiomes classiques de la th\'eorie des ensembles) qui tente de développer une approche formelle de la problématique de Spinoza (Voir la Section \ref{S2} et la Section \ref{S4}). Cependant, contrairement \`a Spinoza, j'appelle "{\it Nature}" $N_t$ \`a un instant $t$, uniquement l'univers de toutes les  choses $A$ qui existent ou ayant exist\'e avant l'instant $t$ et qui ne co\"{i}ncident pas avec leur propre cause (un principe fondamental de la physique). Je d\'efinis ensuite formellement $E_t$ comme \'etant l'ensemble de toutes les choses existantes ou ont existé avant l'instant $t$, englobant ainsi \'eventuellement les choses qui co\"{i}ncideraient avec leurs propres causes si toutefois ces choses l\`a avaient été amenées à exister. Par ailleurs, une fois que les objets trait\'es seront bien d\'efinis, la mod\'elisation et les axiomes seront bien pos\'es, les preuves seront purement math\'ematiques.
 
\vskip5mm
\'Evidemment, il est constant que $N_t$ est une partie de $E_t$, pour tout instant $t$. La premi\`ere question qui se pose alors est la suivante : Est-il vrai que $E_t=N_t$ ou bien $N_t$ est strictement inclus dans $E_t$ ? Je d\'emontrerai en m'appuyant sur ces deux axiomes, qu'il existe n\'ecessairement une unique chose non vide $S$ dans $E_t$ qui existe en soi. Autrement dit, il existe une unique chose qui est cause de soi, c'est-\`a-dire dont l'existence et la cause ne font qu'un. Il appartient donc \`a la nature de cette chose d'exister : C'est la Substance de Spinoza. Je prouverai par la m\^eme occasion que $S$ est ind\'ependante du temps et que pour tout instant $t$ on a en fait $E_t=N_t \cup S $. Notons, que dans les Propositions XII et XIII de {\it l'Ethique}, Spinoza affirme que la substance est indivisible. Il sera aussi prouv\'e dans cette note que $S$ est effectivement indivisible, c'est-\`a-dire qu'elle ne se transforme pas en deux (ou plusieurs) parties indépendantes (Voir Proposition \ref{Substance} en Section \ref{S4}).\\

Il est \`a noter que la Substance $S$ établie dans cette note, appara\^{i}t comme ext\'erieur \`a la Nature $N_t$ \`a chaque instant $t$, alors que chez Spinoza la notion de Nature doit plut\^ot co\"{i}ncider avec $E_t$ dans sa globalit\'e \`a chaque instant $t$. Ces deux approches semblent, en apparence, diverger mais en réalité elle différent surtout dans la formulation. Ceci étant dit, le but de cette note n'est pas nécessairement de formaliser à l'identique la pensée de Spinoza, mais de proposer un modèle mathématique qui approche le mieux possible la pensée de l'auteur de {\it l'Ethique}. En effet, ce que j'appelle la substance ici, doit correspondre à la "Nature naturante" chez Spinoza. Quant à la "Nature naturée", elle n'est rien d'autre chez Spinoza que la modification de la substance par la nécessité de sa nature. A partir de là, si on considère $E_t$ comme une structure globale, rien n'empêche qu'elle coïncide avec la "Nature naturée" de Spinoza. Il est juste à préciser, que $E_t$ est considérée dans cette note uniquement d'un point de vu ensembliste sans structure globale. D'où la différence entre $E_t$ à un instant $t$ et un état primordial qui correspond à ce que j'ai appelé la substance $S$. Autrement dit, la cause de tout ce qui a existé avant hier n'est pas égal à tout ce qui a existé avant hier mais probablement inclus dans tout ce qui a existé avant l'an dernier. Alors qu'on montrera que la cause de $S$ est égal à $S$. En d'autre terme, la "Nature naturante" est antérieur à la "Nature naturée". Il est \`a noter aussi que, contrairement \`a Spinoza qui cherche \`a connaitre la substance par son essence et ses attributs, on ne pr\'etend pas dans cette note expliquer de quoi est faite la nature de cette substance. Il est uniquement question de prouver formellement et gr\^ace \`a des outils math\'ematiques qu'elle existe et qu'elle est unique. L'id\'ee est quelque peu similaire \`a l'exemple suivant : dans un langage fini (un dictionnaire par exemple) o\`u l'on admet que tout mot possède une d\'efinition unique, il est alors n\'ecessaire qu'il y ait au moins un mot qui est sa propre d\'efinition s'il l'on veut éviter de d\'efinir un premier mot par un deuxième et puis ce même deuxième par le premier, autrement dit si l'on veut \'eviter que la chaine des d\'efinitions fasse une boucle. En terme de causalité, il est admis de Spinoza et des physiciens classiques, que le temps n'est pas cyclique. En d'autres termes, le fait que l'effet ne pr\'ec\`ede jamais sa cause est un principe fondamental sur lequel repose toute la physique. 
 
\vskip5mm

La preuve que j'apporterai reposera sur le th\'eor\`eme de Knaster-Tarski qui est un r\'esultat math\'ematique de la th\'eorie des points fixes d\^u aux math\'ematiciens, Knaster \cite{K} (1928) et Tarski \cite{T} (1955). Ce r\'esultat est \'egalement  attribu\'e par Y. N. Moschovakis \cite{M} \`a Zermelo. L'inter\^et de l'approche d\'evelopp\'ee dans cette note, c'est qu'elle permet de revisiter {\it l'Ethique} de Spinoza via les math\'ematiques modernes que l'\'epoque de Spinoza ignorait, tout en confirmant les th\`eses soutenues par ce dernier dans les Propositions XI, XII, XIII et XIV de {\it l'Ethique}.

\vskip5mm

Voici les deux premiers axiomes que je propose.

\vskip5mm

$(A1)$ Toute chose existante est l'effet d'une cause bien d\'et\'ermin\'ee (toujours sous entendue non vide) et l'effet ne peut pas pr\'ec\'eder sa cause. (Voir aussi les axiomes de {\it l'Ethique}).

\vskip5mm

\noindent{\it Explication : C'est un principe fondamental de la physique.}

\vskip5mm

$(A2)$ La cause du tout doit envelopper la cause de la partie. (Cet axiome ne se trouve pas explicitement dans {\it l'Ethique}).

\vskip5mm

\noindent{\it Explication : Cet axiome dit que si une chose enveloppe une autre alors la cause de la premi\`ere enveloppe la cause de la seconde.  Lorsqu'une chose est consid\'er\'ee comme compos\'ee de plusieurs parties, sans chacune des parties le tout ne peut pas exister. Donc la cause d'une partie est contenue dans la cause du tout. Par exemple, la cause d'une page d'un livre fait partie de la cause du livre étant donné que sans ses pages, le livre ne peut exister. }

\vskip5mm

Gr\^ace aux deux axiomes $(A1)$ et $(A2)$ et une modélisation mathématique simple, on prouvera dans la Proposition \ref{Substance1} : 

\paragraph{L'existence de la substance :}
{\it Il existe une substance $S$ dans le monde de l'existant. Tout autre substance (si elle existe), contiendrait la substance $S$. Autrement dit, $S$ est la plus petite substance contenue dans tout autre substance s'il y a plusieurs substances.}

\vskip5mm
Je rappelle à ce niveau que Spinoza \'enonce directement \`a partir de la d\'efinition d'une substance, la Proposition II de {\it l'Ethique} : {\it "Deux substances ayant des attributs diff\'erents n'ont rien en commun entre elles"}. Si on admet cette proposition comme \'etant intuitivement vraie, on en tire directement l'unicit\'e de la substance. Car d'apr\`es ce que j'ai \'enonc\'e plus haut (ce qui sera prouv\'e ult\'erieurement), toute autre substance si jamais elle existait, contiendrait en elle la substance $S$ qui existe n\'ecessairement. Or si deux substances diff\'erentes ne doivent rien avoir en commun, sachant qu'elles ont toutes $S$ en commun c'est qu'en fait, il n y a pas d'autre substance que $S$.  Ainsi on aura fini avec l'existence et l'unicit\'e de la substance $S$ \`a partir uniquement des deux axiomes $(A1)$ et $(A2)$. Mais comme je n'ai pas défini une substance à partir de ses attributs, comme le fait Spinoza, mais uniquement comme étant une chose qui est cause de soi, je suggère à la place de la Proposition II de de {\it l'Ethique}, l'axiome équivalent suivant :

\vskip5mm

$(A3)$ Deux substances différentes n'ont rien en commun.

\vskip5mm

\noindent{\it Explication : Comme le précise Spinoza dans sa démonstration de la Proposition II de {\it l'Ethique}, chacune, en effet, doit exister en soi et doit être conçue par soi, autrement dit le concept de l'une n'enveloppe pas le concept de l'autre.}

\vskip5mm
Si on assume donc l'axiome $(A3)$ en plus des axiomes $(A1)$ et $(A2)$, voici alors ce qu'on d\'emontrera (voir Proposition \ref{Substance}) : 

\paragraph{Unicit\'e et indivisibilit\'e de la substance :}
{\it Il existe une unique substance indivisible $S$ dans le monde de l'existant.} 
\vskip5mm
Notons qu'a partir de ce qu'on vient d'énoncer, on déduit le caractère infini de la substance et ceci découle immédiatement de la définition donnée par Spinoza, c'est-à-dire que la substance $S$ n'est pas contenue dans une autre substance de même nature. C'est clairement le cas puisqu'il n'y a pas d'autre substance que $S$. Le fait que $S$ enveloppe la {\it cause première} sera démontré dans le Corollaire \ref{Cor1}, une fois qu'on s'accorde à définir la {\it cause première} comme étant la cause contenue dans toutes les autres causes.
\vskip5mm

La démarche qui sera suivit dans les prochaines sections se résume ainsi : En admettant les trois axiomes $(A1)$, $(A2)$ et $(A3)$ ainsi qu'une modélisation mathématique, on partira du multiple c'est-à-dire de la diversité des êtres de l'existence, pour arriver via les mathématiques à l'Un et l'unique c'est-à-dire à la substance. 
\section{ Une mod\'elisation math\'ematique.} \label{S2} 

Commen\c cons par mod\'eliser le temps par la droite r\'eelle not\'ee $\R$.

\subsection{Quelques notations et d\'efinitions.}
Pour chaque instant $t\in \R$ arbitrairement fix\'e : 



$\bullet$ on appelle cause d'une chose $A$, la chose qui a fait ou qui fait que $A$ existe ou a existé.

$\bullet$ on dit qu'une chose a une cause en dehors d'elle m\^eme, si cette chose ne co\"{i}ncide pas avec sa cause.

$\bullet$ on note $N_t$ la Nature avant l'instant $t$, autrement dit l'ensemble de tous ce qui peut \^etre consid\'er\'e comme un ''\^etre'' ou un ''objet'' ayant exist\'e avant l'instant $t$ et dont la cause le précède strictement dans le temps, en particulier qui ne co\"{i}ncide pas avec sa propre cause.

$\bullet$ on note $E_t$ l'ensemble de tous ce qui peut \^etre consid\'er\'e comme un ''\^etre'' ou un ''objet'' existant avant l'instant $t$. On a toujours $N_t\subset E_t$ pour tout $t\in \R$. Par ''chose'', on d\'esigne toute partie de $E_t$ c'est-\`a-dire un sous-ensemble de $E_t$ qui peut toutefois \^etre un singleton.

$\bullet$ par $(\mathcal{P}(E_t),\subset)$ on d\'esigne l'ensemble de toutes les parties de $E_t$ muni de l'ordre de l'inclusion.

$\bullet$ on note $\mathcal{C}(A)$ la cause bien d\'et\'ermin\'ee de $A$, pour tout $A\in \mathcal{P}(E_t)$ et pour tout $t \in \R$. L'ensemble $\mathcal{C}(A)$ peut d\'ependre de l'instant $t$, en revanche la loi de causalit\'e $\mathcal{C}$ est suppos\'ee \^etre immuable, autrement dit est ind\'ependante du temps. La cause d'un \'el\'ement $x\in E_t$ est tout simplement la cause de $\lbrace x \rbrace \in \mathcal{P}(E_t)$.

$\bullet$ on appelle substance toute partie $S\subset E_t$ ($t\in \R$) qui co\"{i}ncideraient avec sa propre cause, c'est-à-dire $\mathcal{C}(S)=S$ si toutefois une telle chose existe.

$\bullet$ on note par $E$, le monde de l'existant c'est-\`a-dire $E= \cup_{t\in \R} E_t$.

$\bullet$ on note $e$ la chose commune \`a tout ce qui existe. Autrement dit, la chose contenue dans toute autre chose de $E_t$ pour tout $t\in \R$ c'est-\`a-dire $$e:= \cap_{t\in \R}\cap_{A\subset E_t} A.$$
Cet ensemble $e$ est appel\'e en math\'ematiques l'ensemble vide et est not\'e $\emptyset$ ou $\lbrace\rbrace$. Cependant, nous gardons pour l'instant la notation $e$ au lieu de $\emptyset$, mais nous pr\'ecisons que $e$ a exactement les m\^emes propriétés  mathématiques que $\emptyset$. Notons juste que l'ensemble $e$ c'est-à-dire l'ensemble vide, peut ici avoir deux interprétations équivalentes. La première désigne négativement "{\it l'absence de choses commune}" à ce qui existe dans $E_t$, d'où la notation de $e$ par $\emptyset$. Mais la deuxième interprétation désigne positivement "{\it la diversité des êtres}" dans $E_t$. Dire que la chose commune $e$ dans $E_t$ est vide, autrement dit est inexistante, revient à dire que les êtres dans $E_t$ sont divers.  L'ensemble $e$ aurait donc pu tout aussi bien être noté "$d$" comme diversité. 

\vskip5mm
On rappelle que, pour tout $t\in \R$, $(\mathcal{P}(E_t),\subset)$ est un treillis complet (et m\^eme une alg\`ebre de Boole compl\`ete)  qui admet un plus petit \'el\'ement $e=\emptyset$ et un plus grand \'el\'ement $E_t$. L'introduction de l'ensemble $\mathcal{P}(E_t)$ se justifie par les passages suivant que l'on trouvera dans {\it l'Ethique} \cite{KFM} pages 315-316 :

\vskip5mm

{\it 1. Que la vraie d\'efinition de chaque chose n'enveloppe et n'exprime rien \`a part la nature de la chose d\'efinie. D'o\`u il suit :

2. Que nulle d\'efinition n'enveloppe et n'exprime aucun nombre d\'etermin\'e d'individus, puisqu'elle n'exprime rien d'autre que la nature de la chose d\'efinie. Par exemple, la d\'efinition du triangle n'exprime rien d'autre que la simple nature du triangle, mais non quelque nombre d\'etermin\'e de triangles.

3. Il faut noter que, de chaque chose existante, il est n\'ecessairement donn\'e quelque cause d\'etermin\'ee, par laquelle elle existe.

4. Il faut enfin noter que cette cause par laquelle certaine chose existe doit, ou bien \^etre contenue dans la nature m\^eme et la d\'efinition de la chose existante (parce que, en effet, il appartient \`a sa nature d'exister), ou bien \^etre donn\'ee en dehors d'elle.

Cela pos\'e, il suit que, s'il existe dans la Nature quelque nombre d\'etermin\'e d'individus, une cause doit n\'ecessairement \^etre donn\'ee pourquoi ces individus existent, et pourquoi ni un plus grand nombre ni un plus petit.}

\vskip5mm

Ce dernier passage exprime le fait que les causes respectives de deux individus $\lbrace a\rbrace$ et $\lbrace b\rbrace$, n'enveloppent pas la cause de la paire d'individus $\lbrace a,b \rbrace$. Autrement dit, il est possible que $\mathcal{C}(\lbrace a\rbrace) \cup \mathcal{C}(\lbrace b \rbrace) \subset \mathcal{C}(\lbrace a,b \rbrace)$ mais que $\mathcal{C}(\lbrace a,b \rbrace)\neq \mathcal{C}(\lbrace a\rbrace) \cup \mathcal{C}(\lbrace b \rbrace)$. Il fallait donc prendre en consid\'eration l'ensemble des parties des \^etres ou objets existants, ce qui a motiv\'e l'introduction de $\mathcal{P}(E_t)$. Il faut aussi noter que si deux choses n'ont rien en commun entre elles, c'est qu'il y a une cause non vide et bien déterminée \`a cela. Ceci explique entre autre pourquoi la cause du vide ne peut pas \^etre vide.

\vskip5mm
\subsection{Traduction mathématique des deux premiers axiomes.}
Commen\c cons par traduire math\'ematiquement les axiomes $(A1)$ et $(A2)$.

\vskip5mm
$(A1)$ Toute chose existante est l'effet d'une cause bien déterminée (toujours sous-entendue non vide) et l'effet ne peut pas pr\'ec\'eder sa cause. Autrement dit, pour tout $t\in \R$ et pour tout $A\in \mathcal{P}(E_t)$, il existe une cause unique $\mathcal{C}(A)\in \mathcal{P}(E_t)\setminus\lbrace \emptyset\rbrace$. Cela signifie que la loi de causalité $\mathcal{C} : \mathcal{P}(E_t) \longrightarrow \mathcal{P}(E_t)\setminus\lbrace \emptyset\rbrace$ est une application, pour tout $t\in \R$.

\vskip5mm

$(A2)$ La cause du tout doit envelopper la cause de la partie. Cela signifie que pour deux choses existantes, si l'une enveloppe l'autre alors la cause de la premi\`ere enveloppe la cause de la seconde. Autrement dit : pour tout $t\in \R$ et pour tout $A, B \in \mathcal{P}(E_t)$, si $A\subset B$ alors $\mathcal{C}(A)\subset \mathcal{C}(B)$. Cela signifie que la loi de causalité $\mathcal{C} : \mathcal{P}(E_t) \longrightarrow \mathcal{P}(E_t)$ est croissante, pour tout $t\in \R$.

\vskip5mm

Remarquons qu'il est tout à fait possible d'unifier les deux axiomes $(A1)$ et $(A2)$ en un seul axiome de la manière suivante : 
\vskip5mm

{\bf Un axiome unifié :} Il existe une loi de causalité $\mathcal{C}$ telle que : $\forall t \in \R,$ la loi $ \hspace{2mm} \mathcal{C} : \mathcal{P}(E_t) \longrightarrow \mathcal{P}(E_t)\setminus \lbrace  \emptyset \rbrace$ est une application croissante pour l'ordre de l'inclusion.

\vskip5mm

C'est en fait cet axiome unifié qui est la clé de l'existence d'une substance non vide dans le monde de l'existant et ceci grâce au célèbre théorème de Knaster-Tarski comme nous le verrons plus loin.
\subsection{Deux premières propositions fondamentales.}
\paragraph{A) Le sens de l'ensemble $e$ :} Rappelons que l'ensemble contenu dans tout sous-ensembles non vide $A$ de $E_t$ pour tout $t \in \R$ s'écrit
$$\emptyset=e:= \cap_{t\in \R}\cap_{A\subset E_t} A,$$
et consiste en l'intersection de tous les sous ensembles non vides $A$ de $E_t$ ($t\in \R$), ce qui aboutit sur l'ensemble vide. Comme les êtres de $E_t$ sont différents les uns des autres, dire par exemple que $e=\lbrace x \rbrace \cap \lbrace y \rbrace$ équivaut tout simplement à dire que $x\neq y$, pour tout $x, y \in E_t$ et pour tout $t\in \R$. Ainsi, évoquer l'ensemble vide $e$ revient à faire référence à la diversité des êtres dans $E_t$. En effet, $e$ n'a de sens ici que parce qu'il existe des êtres différents dans $E_t$. On peut donc dire que $e$ autrement dit l'ensemble vide, exprime la diversité des êtres dans $E_t$ pour tout $t\in \R$. De ce fait, parler de la cause de $e$ revient à parler de la cause de la diversité des êtres :  $\mathcal{C}(e):=$ " La cause de la diversité des êtres de $E_t$ pour $t\in \R$". Ceci semble en total accord avec la pensée de Spinoza, puisque ce dernier stipule que les essences des êtres sont éternelles (mais pas leur existences évidement), de ce fait, la diversité des êtres existant se manifesterait déjà dans leur éternelles essences. En conséquence, on a que $\mathcal{C}(e)\neq e$ vue qu'il doit exister par l'axiome $(A1)$ une cause bien déterminée (sous entendu non vide) à cette diversité des êtres, puisque cette diversité est bien une réalité existante dans $E_t$ pour chaque instant $t\in \R$. Autrement dit, il doit exister une cause bien déterminée et non vide qui explique pourquoi il existe des choses différentes dans le monde de l'existant et non pas une unique chose. On obtient ainsi la première proposition.

\begin{Prop} \label{IV} \label{symp1} On a que $\mathcal{C}(e)\neq e$ ou, ce qui revient au même $\mathcal{C}(\emptyset)\neq \emptyset$. Autrement dit, la cause de la diversité des êtres de $E_t$ pour $t\in \R$, est non vide.

\end{Prop}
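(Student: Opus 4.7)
Le plan est de se ramener directement à la traduction mathématique de l'axiome $(A1)$ donnée juste au-dessus. Cette traduction stipule que, pour tout $t\in \R$, la loi de causalité $\mathcal{C} : \mathcal{P}(E_t) \longrightarrow \mathcal{P}(E_t)\setminus \lbrace \emptyset \rbrace$ est une application prenant ses valeurs, par construction, dans des ensembles non vides. La première étape consistera donc simplement à observer que $e=\emptyset$ appartient bien à l'ensemble de départ $\mathcal{P}(E_t)$, en tant que partie (la plus petite) de $E_t$, et que l'application $\mathcal{C}$ lui est par conséquent applicable sans aucune restriction.

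La seconde étape, plus interprétative que technique, consistera à justifier pourquoi il est légitime de parler ici de la ``cause de $e$''. Je m'appuierai pour cela sur la discussion précédant l'énoncé, dans laquelle $e$ est lu non pas comme un objet trivial mais comme l'expression formelle de la diversité des êtres de $E_t$ : le fait que $\lbrace x \rbrace \cap \lbrace y \rbrace = \emptyset$ traduit précisément la distinction $x\neq y$. Cette diversité étant une réalité effectivement présente dans $E_t$ à chaque instant, elle relève pleinement du champ d'application de $(A1)$ et doit donc admettre une cause non vide.

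En combinant ces deux observations, on obtient immédiatement $\mathcal{C}(e)\in \mathcal{P}(E_t)\setminus \lbrace \emptyset \rbrace$, c'est-à-dire $\mathcal{C}(e)\neq \emptyset = e$, ce qui établit la proposition. La preuve formelle se réduit ainsi à une application directe de l'axiome unifié introduit juste avant et ne demande aucun argument supplémentaire. Le principal obstacle n'est donc pas d'ordre mathématique mais conceptuel : il consistera à présenter clairement l'identification entre ``l'ensemble vide $e$'' et ``la diversité des êtres'', de sorte que l'application de $(A1)$ à $e$ apparaisse comme une conséquence naturelle de la modélisation, et non comme un artifice formel. Une fois cette lecture admise, il n'y a plus rien à démontrer.
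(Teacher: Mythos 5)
Votre démonstration est correcte et suit essentiellement la même voie que celle du texte : l'argument principal de la proposition repose précisément sur la traduction mathématique de l'axiome $(A1)$, selon laquelle $\mathcal{C}$ prend ses valeurs dans $\mathcal{P}(E_t)\setminus\lbrace\emptyset\rbrace$, d'où $\mathcal{C}(e)\neq\emptyset=e$, accompagnée de la même lecture interprétative de $e$ comme expression de la diversité des êtres. Le texte ajoute seulement deux démonstrations alternatives (l'une fondée sur l'idée que ce qui est dépourvu de toute cause ne peut être cause, l'autre sur l'axiome VII de Spinoza), mais votre argument coïncide avec la preuve première.
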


L'explication du sens de cette proposition a été livré plus haut et sa démonstration repose comme je l'ai mentioné sur l'axiome $(A1)$. On peut cependant en donner une autre démonstration de la manière suivante : 
\begin{proof}[D\'emonstration 2. :]  De par la d\'efinition m\^eme de l'ensemble vide c'est qu'il est d\'epourvu de toute chose et donc d\'epourvu de toute cause. Ce qui est d\'epourvu de toute cause ne peut \^etre une cause (m\^eme pas de lui m\^eme).
\end{proof}

On peut aussi fournir une troisième démonstration, qui cette fois repose sur l'axiome VII de {\it l'Ethique} qui stipule que : {\it "tout ce qui peut \^etre con\c cu comme non existant, son essence n'enveloppe pas l'existence"}. Autrement dit, tout ce qui peut \^etre con\c cu comme non existant, ne peut \^ etre cause de soi. Or ici $e=\emptyset$ exprime le fait que ce qui est commun \`a toutes les choses de $E_t$ ($t\in \R$) est inexistant. Il s'en suit donc que $e$ ne peut être sa propre cause. D'o\`u  $\mathcal{C}(e)\neq e$ par cet axiome VII de Spinoza. Notons encore une dernière fois, que l'ensemble vide $e=\emptyset$ prend sens dès qu'il existe au moins deux êtres différents dans $E_t$ pour un certain instant $t\in \R$. Par conséquent, dire que la chose commune à tout ce qui existe est inexistante revient à dire que les choses existantes sont diverses dans leur ensemble et ceci doit avoir une cause bien déterminée et non vide qui l'explique. 

\paragraph{B) La cause de la diversité est contenue dans toutes les causes.} Sans trop m'attarder sur l'aporie de "l'Un et le multiple", je reprends ici la formule de Hegel sur l'identité :
\begin{center}
{\it "Aussi l'absolu lui-même est-il l'identité de l'identité et de la non-identité."}
\end{center}
Autrement dit, ce qui fait qu'une chose est, c'est à la fois ce qu'elle est et ce qu'elle n'est pas. Cette formule trouve toute sa légitimité en terme de causalité en lien avec l'axiome $(A2)$ proposé dans cette note. Partant de la formule de Hegel adaptée à la causalité, on peut expliquer philosophiquement ce qui se déduira mathématiquement de l'axiome $(A2)$, c'est-à-dire, pourquoi la cause de la diversité, autrement dit $\mathcal{C}(e)$, doit être contenue dans toutes les causes. En effet, pour qu'un être $x$ soit différent d'un autre être $y$, il est nécessaire que la cause $\mathcal{C}(\lbrace x \rbrace)$ qui précède $x$, contienne en elle la raison qui explique pourquoi $x$ existera et pourquoi il sera différent de $y$. En d'autre terme, $\mathcal{C}(x \neq y)=\mathcal{C}(e=\lbrace x \rbrace \cap \lbrace y \rbrace)=\mathcal{C}(e)$ doit être contenu dans $\mathcal{C}(\lbrace x \rbrace)$ et aussi dans $\mathcal{C}(\lbrace y \rbrace)$. Donc, la cause de $x$ c'est-à-dire $\mathcal{C}(\lbrace x \rbrace)$, doit contenir en elle non seulement ce qui fera $x$ mais aussi l'explication de ce que $x$ ne sera pas. C'est uniquement ainsi qu'on peut expliquer pourquoi $x$ existe et pourquoi il est différent des autres êtres. Ce raisonnement s'étend bien entendu à toute partie $A\neq \emptyset$ de $E_t$ ($t\in \R$) qui n'est pas nécessairement un singleton. Ainsi, on obtient la proposition suivante dont la preuve mathématique repose tout simplement sur l'axiome $(A2)$ :

\begin{Prop} \label{V} Supposons que l'axiome $(A2)$ est satisfait. Soit $t\in \R$ et $A \in \mathcal{P}(E_t)$. Alors, $\mathcal{C}(e) \subset \mathcal{C}(A)$. Autrement dit, la cause de la diversité des êtres (c'est-\`a-dire la cause de $e=\emptyset$) est contenue dans toutes les causes.

\end{Prop}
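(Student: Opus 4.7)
La preuve que je propose tient en une seule application de l'axiome $(A2)$. L'idée centrale est que $e = \emptyset$ est le plus petit élément du treillis complet $(\mathcal{P}(E_t), \subset)$, de sorte que $e \subset A$ pour toute partie $A \in \mathcal{P}(E_t)$ ; la monotonie de la loi de causalité $\mathcal{C}$ fournit alors immédiatement l'inclusion souhaitée.

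Plus précisément, je procéderais comme suit. Soit $t \in \R$ fixé et $A \in \mathcal{P}(E_t)$ arbitraire. Je commencerais par rappeler que, par définition même de l'ensemble vide (et comme cela est explicitement noté dans la section de notations), on a $e = \emptyset = \cap_{s\in \R}\cap_{B\subset E_s} B \subset A$. J'invoquerais ensuite la traduction mathématique de l'axiome $(A2)$ établie dans la sous-section précédente, qui affirme précisément que l'application $\mathcal{C} : \mathcal{P}(E_t) \longrightarrow \mathcal{P}(E_t)$ est croissante pour l'ordre de l'inclusion. De $e \subset A$ je déduirais donc directement $\mathcal{C}(e) \subset \mathcal{C}(A)$, ce qui est exactement l'énoncé à démontrer. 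Comme $A$ était quelconque dans $\mathcal{P}(E_t)$ et $t$ arbitraire dans $\R$, la conclusion vaut pour toutes les causes.

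Il n'y a à mon sens aucun obstacle technique dans cette démonstration : l'argument se réduit à l'application de la croissance de $\mathcal{C}$ au plus petit élément du treillis. Toute la véritable difficulté, qui est d'ordre conceptuel et non mathématique, a déjà été absorbée en amont, d'une part dans le travail de modélisation de la Section~\ref{S2} (notamment dans l'identification de $e$ comme expression mathématique de la diversité des êtres, justifiée par la Proposition~\ref{IV}), et d'autre part dans la formulation monotone de la loi de causalité portée par l'axiome~$(A2)$. La seule vigilance à avoir dans la rédaction sera d'insister sur le fait que la croissance invoquée est celle de $\mathcal{C}$ vue comme application de $\mathcal{P}(E_t)$ dans $\mathcal{P}(E_t)$ tout entier, l'image étant en fait incluse dans $\mathcal{P}(E_t)\setminus\{\emptyset\}$ par l'axiome $(A1)$, ce qui garantit en particulier que $\mathcal{C}(e)$ est bien non vide — en accord parfait avec la Proposition~\ref{IV}.
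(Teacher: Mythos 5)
Votre démonstration est correcte et suit essentiellement la même voie que celle du texte : on observe que $e=\emptyset$ est le plus petit élément de $(\mathcal{P}(E_t),\subset)$, donc $e\subset A$, et la croissance de $\mathcal{C}$ donnée par l'axiome $(A2)$ fournit immédiatement $\mathcal{C}(e)\subset \mathcal{C}(A)$. Votre remarque finale sur la non-vacuité de $\mathcal{C}(e)$ via l'axiome $(A1)$ est un complément exact mais non nécessaire à l'énoncé, qui relève plutôt de la Proposition~\ref{IV}.
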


\begin{proof} Partons de l'axiome $(A2)$, comme $e\subset A$ pour tout $\emptyset \neq A\subset E_t$ ($t\in E_t)$ et que la loi de causalité est croissante, alors  $\mathcal{C}(e) \subset \mathcal{C}(A)$. 

\end{proof}

\section{Le th\'eor\`eme de Knaster-Tarski.} \label{S3}

Le th\'eor\`eme de Knaster-Tarski, qu'on utilisera plus loin, est un r\'esultat bien connu de la th\'eorie des ensembles. Il concerne l'existence de points fixes pour une application croissante d'un treillis complet dans lui m\^eme. On donnera plus bas son \'enonc\'e ainsi qu'une d\'emonstration. On commence par rappeler ci-dessous quelques notions bien connues concernant les treillis.\\

\noindent {\bf Treillis :} Un treillis $(T,\leq)$ est la donn\'ee d'un ensemble $T$ et d'une relation not\'ee $\leq$ (appel\'ee ordre partiel) satisfaisant les propri\'et\'es suivantes :

$(1)$ $\forall x \in T$, $x\leq x$.

$(2)$ $\forall x, y \in T$, si $x\leq y$ et $y\leq x$ alors $x=y$. 

$(3)$ $\forall x, y, z  \in T$, si $x\leq y$ et $y\leq z$ alors $x\leq z$.

$(4)$ pour tous éléments $x$ et $y$  de $T$, il existe une borne supérieure et une borne inférieure à l'ensemble $\lbrace x, y \rbrace$.

\vskip5mm

\noindent {\bf Treillis complet :} Un treillis complet $(T,\leq)$ est un treillis pour lequel toute partie $A$ de $T$ admet une borne sup\'erieure not\'ee $\sup(A)\in T$ et une borne inf\'erieure not\'ee $\inf(A)\in T$. Autrement dit:

$(i)$ $\forall a \in A$, $a\leq \sup(A)$ (respectivement $\inf(A)\leq a$),

$(ii)$ s'il existe $M\in T$ tel que $\forall a \in A$, $a\leq M$ alors $\sup(A)\leq M$ (respectivement s'il existe $m\in T$ tel que $\forall a \in A$, $m \leq a$ alors $m\leq \inf(A)$)

\vskip5mm

\noindent{\bf Exemple :} Soit $E$ un ensemble non vide et $\mathcal{P}(E)$ l'ensemble des parties de $E$. Alors $(\mathcal{P}(E),\subset)$ est un treillis complet, o\`u $\subset$ d\'esigne l'inclusion. 

\vskip5mm

On rappelle ci-dessous le th\'eor\`eme de Knaster-Tarski.

\begin{Lem}\label{KT} {\bf (Knaster-Tarski)} Soit $(B,\leq)$ un Treillis complet et $\mathcal{C}: B \rightarrow B$ une application croissante pour l'ordre $\leq$.  Alors, il existe respectivement un plus petit et un plus grand point fixe $\omega, \theta \in B$, autrement dit : $$ \mathcal{C}(\omega)=\omega, \hspace{2mm} \mathcal{C}(\theta)=\theta$$ et pour tout élément $\lambda\in B$ tel que $\mathcal{C}(\lambda)=\lambda$ on a $$\omega\leq \lambda\leq \theta.$$ 
 
\end{Lem}

\begin{proof}[Preuve :] $\bullet$ {\it \bf Le plus petit point fixe : } Posons $I:=\left\{a\in B : \mathcal{C}(a)\leq a\right\}$. Comme $B$ est un Treillis, il contient l'\'el\'ement $\omega:= \inf(I)$.  Comme $\omega\leq a$ pour tout $a\in I$, la croissance de $\mathcal{C}$ entraîne que $\mathcal{C}(\omega)\subset \mathcal{C}(a)$ et comme $a\in I$ alors $\mathcal{C}(\omega)\leq a$ et ceci pour tout $a\in I$, il s'en suit que $\mathcal{C}(\omega)\leq \inf(I) =\omega$. On conclut que $\omega\in I$. Posons maintenant $\varpi:=\mathcal{C}(\omega)$. On a donc $\varpi\leq \omega$. La croissance de $\mathcal{C}$ entraîne que $\mathcal{C}(\varpi)\leq \mathcal{C}(\omega):=\varpi$. D'o\`u $\varpi\in I$. Puisque $\omega$ est le plus petit \'el\'ement de $I$ alors $\omega\leq \varpi$. Comme on avait d\'ej\`a \'etabli l'in\'egalit\'e inverse alors $\varpi=\omega$ c'est-\`a-dire $\mathcal{C}(\omega)=\omega$. D'o\`u l'existence d'un point fixe. Soit maintenant un \'el\'ement $\lambda\in B$ tel que $\mathcal{C}(\lambda)=\lambda$. On a en particulier que 
$\lambda\in I$ et donc $\omega\leq \lambda$. L'\'el\'ement $\omega$ est donc le plus petit point fixe de $\mathcal{C}$.\\
$\bullet$ {\bf Le plus grand point fixe :} Soit  maintenant $J:=\left\{a\in B : a \leq \mathcal{C}(a)\right\}$. Comme $B$ est un Treillis, il contient l'\'el\'ement $\theta:= \sup(J)$. Alors $a\leq \theta$ pour tout $a\in J$ et par la croissance de $\mathcal{C}$ on a $\mathcal{C}(a)\leq \mathcal{C}(\theta)$ et puisque $a\in J$ alors $a \leq \mathcal{C}(\theta)$ pour tout $a\in J$. D'o\`u par passage \`a la réunion, $\theta\leq \mathcal{C}(\theta)$. Ainsi $\theta\in J$. En posant $\delta:=\mathcal{C}(\theta)$ et en utilisant la croissance de $\mathcal{C}$, on montre comme plus haut que $\theta=\mathcal{C}(\theta)$. L'\'el\'ement $\theta$ est le plus grand point fixe. En effet, soit $\lambda$ tel que $\mathcal{C}(\lambda)=\lambda$. En particulier $\lambda\in J$ et donc $\lambda\leq \theta$.\\
$\bullet$ {\it \bf Conclusion : } On a au final d\'emontr\'e  que $\omega=\mathcal{C}(\omega)\leq \theta=\mathcal{C}(\theta)$ et tout autre point fixe $\lambda$ v\'erifie $\omega\leq \lambda\leq \theta$. \\
\end{proof}

\begin{Rem} Le th\'eor\`eme de Knaster-Tarski a une application au th\'eor\`eme de Cantor-Bernstein (voir \cite{wiki}) qui est bien connue.
\end{Rem}

\section{Existence et unicit\'e de la substance de Spinoza.}\label{S4}

On \'enonce maintenant les r\'esultats principaux de cette note, les propositions qui confirment les th\`eses propos\'ees par Spinoza dans les Propositions XI, XII, XIII et XIV de {\it l'Ethique}. 
Par la proposition qui suit, on montre qu'il existe n\'ecessairement une chose qui est cause de soi. 

\begin{Prop} \label{Substance1} Supposons que les axiomes $(A1)$ et $(A2)$ sont satisfaits. Alors, il existe une substance $S\subset \cap_{t\in \R}E_t$, distincte de $e$ i.e $S\neq e$ et contenant la cause de la diversité :

$$\mathcal{C}(e)\subset \mathcal{C}(S)=S,$$ autrement dit $S$ est une substance ou encore, qu'il appartient \`a la nature de $S$ d'exister. Tout autre substance $\Lambda\subset E_t$ pour tout $t\in \R$, contiendrait $S$, c'est-à-dire, $S\subset \Lambda$.

\end{Prop}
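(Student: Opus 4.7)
The plan is to apply the Knaster-Tarski theorem (Lemme \ref{KT}) to the causality map $\mathcal{C}$ viewed on the complete lattice $(\mathcal{P}(E_t),\subset)$ for each fixed $t\in\R$, then to show that the smallest fixed point produced by the theorem does not depend on $t$. Existence of a non-empty self-caused set will be immediate from (A1); containment of $\mathcal{C}(e)$ will come from Proposition \ref{V}; minimality (''$S\subset \Lambda$ for every other substance $\Lambda$'') will follow directly from the characterisation of the smallest fixed point; and the time-independence will require one small but crucial observation about restricting a substance at time $t'$ back to $E_t$.

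First I would fix $t\in \R$ and apply Lemme \ref{KT} to $\mathcal{C}:\mathcal{P}(E_t)\to \mathcal{P}(E_t)$, which is monotone by the unified form of (A1)--(A2). Calling $S_t$ the smallest fixed point, I have $S_t=\mathcal{C}(S_t)$; since axiom (A1) forces $\mathcal{C}$ to take values in $\mathcal{P}(E_t)\setminus\{\emptyset\}$, we get $S_t=\mathcal{C}(S_t)\neq\emptyset$, i.e.\ $S_t\neq e$. Proposition \ref{V} then yields $\mathcal{C}(e)\subset \mathcal{C}(S_t)=S_t$. For the minimality statement, given any substance $\Lambda\subset E_t$ we have $\mathcal{C}(\Lambda)=\Lambda\subset \Lambda$, so $\Lambda$ lies in the set $I:=\{A\in\mathcal{P}(E_t):\mathcal{C}(A)\subset A\}$ used in the proof of Lemme \ref{KT}; since $S_t=\inf(I)$ belongs to $I$, we conclude $S_t\subset \Lambda$.

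The last step, showing $S_t$ is independent of $t$ (so that $S:=S_t \subset \bigcap_{t\in\R}E_t$), is the point where one must use the time-invariance of $\mathcal{C}$ together with the monotonicity of $t\mapsto E_t$. Take $t<t'$, so $E_t\subset E_{t'}$. Because $\mathcal{C}$ is immuable, $S_t$ is still a fixed point of $\mathcal{C}$ regarded as a map on $\mathcal{P}(E_{t'})$, so by minimality of $S_{t'}$ one has $S_{t'}\subset S_t$. For the converse, consider $S_{t'}\cap E_t\in\mathcal{P}(E_t)$: by monotonicity, $\mathcal{C}(S_{t'}\cap E_t)\subset\mathcal{C}(S_{t'})=S_{t'}$, and by (A1) the cause of a subset of $E_t$ lies in $E_t$, so $\mathcal{C}(S_{t'}\cap E_t)\subset S_{t'}\cap E_t$. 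Hence $S_{t'}\cap E_t\in I$, and the minimality of $S_t$ gives $S_t\subset S_{t'}\cap E_t\subset S_{t'}$. Combining the two inclusions, $S_t=S_{t'}$, which delivers both the $t$-independence and the inclusion $S\subset \bigcap_{t\in\R}E_t$. The key (and only nontrivial) step I expect to have to explain carefully is this restriction trick $S_{t'}\mapsto S_{t'}\cap E_t$, since everything else reduces to direct applications of Lemme \ref{KT} and the two earlier propositions.
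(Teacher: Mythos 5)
Your proposal is correct and follows essentially the same route as the paper: Knaster--Tarski applied to $\mathcal{C}$ on each complete lattice $(\mathcal{P}(E_t),\subset)$, minimality of the smallest fixed point to get $S\subset\Lambda$, Proposition \ref{V} for $\mathcal{C}(e)\subset S$, and time-independence by comparing the smallest fixed points in $\mathcal{P}(E_t)$ and $\mathcal{P}(E_{t'})$. The only deviation is your restriction trick $S_{t'}\mapsto S_{t'}\cap E_t$ for the inclusion $S_t\subset S_{t'}$, which is valid but superfluous: having already shown $S_{t'}\subset S_t\subset E_t$, you know $S_{t'}\cap E_t=S_{t'}$ is itself a fixed point lying in $\mathcal{P}(E_t)$, and the minimality of $S_t$ applies to it directly --- which is exactly the paper's (shorter) argument.
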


\begin{proof}[Preuve :]  Gr\^ace aux axiomes $(A1)$ et $(A2)$, on sait que la loi de causalité $\mathcal{C}: \mathcal{P}(E_t)\longrightarrow \mathcal{P}(E_t)$ est bien une application croissante, pour tout $t\in \R$. Comme $\mathcal{P}(E_t)$ est un Treillis complet, il existe par le Lemme \ref{KT} un plus petit point fixe $S_t\in \mathcal{P}(E_t)$ i.e. telle que $\mathcal{C}(S_t)=S_t$ et un plus grand point fixe $D_t=\mathcal{C}(D_t)$. Tout autre point fixe $\Lambda\in \mathcal{P}(E_t)$ satisfait $ S_t\subset \Lambda \subset D_t$.

{\it Ind\'ependance du temps de $S_t$ :} En effet pour $t\leq t'$, on sait que $E_t\subset E_{t'}$ et donc aussi que $\mathcal{P}(E_t)\subset \mathcal{P}(E_{t'})$. Comme on vient de le montrer, il existe un plus petit et un plus grand point fixe $S_t, D_t\in \mathcal{P}(E_t)$
ainsi qu'un plus petit et un plus grand point fixe $S_{t'}, D_{t'}\in \mathcal{P}(E_{t'})$. D'une part, comme $\mathcal{P}(E_t)\subset \mathcal{P}(E_{t'})$, on se trouve donc avec les points fixes $S_t, D_t, S_{t'}, D_{t'}\in \mathcal{P}(E_{t'})$. Ainsi, puisque $S_{t'}, D_{t'}\in \mathcal{P}(E_{t'})$ sont respectivement le plus petit et le plus grand point fixe on obtient que 
$$ S_{t'} \subset S_t \subset D_t\subset D_{t'}.$$

Il s'en suit que $S_{t'}$ existe aussi avant $t$ i.e. $S_{t'}\in \mathcal{P}(E_{t})$, puisque $S_{t'}$ est contenue dans  $S_t\in \mathcal{P}(E_{t})$. Or dans $\mathcal{P}(E_t)$, le plus petit  point fixe \'etant $S_t$, tout autre point fixe de $\mathcal{P}(E_t)$ envelopperait $S_t$. Autrement dit on a aussi 
$$ S_{t} \subset S_{t'}.$$

Donc, $S_{t} = S_{t'}$. Ceci signifie que $S=S_t$ ($\forall t\in \R$) est ind\'ependante du temps. D'où, $S\subset \cap_{t\in \R}E_t$. Comme $e\subset S$ alors par la Proposition \ref{V} (ou par l'axiome $(A2)$) on obtient $\mathcal{C}(e)\subset \mathcal{C}(S)=S$ et par la Proposition \ref{IV} on a que $S\neq e$. 


\end{proof}

Comme je l'ai d\'ej\`a mention\'e dans l'introduction, on peut d\'eduire directement l'unicit\'e de la substance $S$ \`a partir de la Proposition \ref{Substance1} si on admet (comme le fait Spinoza dans la Proposition II de {\it l'Ethique}) que deux substances diff\'erentes ne doivent rien avoir en commun (voir l'axiome $(A3)$), car effectivement chacune doit \^etre cause de soi et exister par soi. D'où la proposition suivante. 

\begin{Prop} \label{Substance2} Supposons que les axiomes $(A1)$, $(A2)$ et $(A3)$ sont satisfaits. Alors, il existe une unique substance $S$ indépendante du temps et contenant la cause de la diversité. Autrement dit, il existe une unique substance $S$ qui satisfait $\mathcal{C}(e)\subset S\subset \cap_{t\in \R} E_t$.
\end{Prop}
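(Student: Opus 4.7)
Mon plan est d'exploiter la Proposition \ref{Substance1} (déjà établie sous les seuls axiomes $(A1)$ et $(A2)$) pour obtenir l'existence, puis d'invoquer l'axiome supplémentaire $(A3)$ pour en déduire l'unicité. Les assertions concernant l'existence d'une substance $S$ indépendante du temps, satisfaisant $\mathcal{C}(e)\subset \mathcal{C}(S)=S \subset \cap_{t\in\R} E_t$, ainsi que la propriété selon laquelle toute autre substance contient $S$, sont déjà à ma disposition; je n'aurais donc à démontrer que l'unicité.

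Pour cela, je raisonnerais par l'absurde. Je supposerais qu'il existe une seconde substance $\Lambda$, c'est-à-dire $\Lambda \subset E_t$ pour un certain $t\in \R$ vérifiant $\mathcal{C}(\Lambda)=\Lambda$, avec $\Lambda \neq S$. Par la dernière assertion de la Proposition \ref{Substance1}, on aurait alors $S \subset \Lambda$. Puisque la même proposition assure que $S\neq e=\emptyset$, il s'ensuivrait que $\emptyset \neq S \subset S \cap \Lambda$, ce qui signifierait que les deux substances $S$ et $\Lambda$ auraient au moins $S$ en commun.

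Il ne resterait plus qu'à confronter ce fait à l'axiome $(A3)$: deux substances différentes ne doivent rien avoir en commun, autrement dit, dans le cadre ensembliste posé à la Section \ref{S2}, leur intersection doit coïncider avec $e=\emptyset$. La contradiction entre $S\cap \Lambda = \emptyset$ et $\emptyset \neq S \subset S\cap \Lambda$ forcerait alors $\Lambda = S$, ce qui établirait l'unicité, puis les propriétés $\mathcal{C}(e)\subset S\subset \cap_{t\in\R} E_t$ se lisent déjà sur $S$ fourni par la Proposition \ref{Substance1}.

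Je n'anticipe pas d'obstacle technique sérieux ici, car la preuve se réduit à un enchaînement direct entre la Proposition \ref{Substance1} et $(A3)$. Le point le plus délicat sera conceptuel plutôt que calculatoire: il faut s'assurer que la traduction ensembliste de $(A3)$ correspond bien à \textbf{intersection vide}, en cohérence avec l'interprétation de $e=\emptyset$ comme \emph{diversité des êtres} rappelée à la Section \ref{S2}. Une fois cette lecture adoptée, l'argument se referme en quelques lignes.
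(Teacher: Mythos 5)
Votre démonstration est correcte et suit essentiellement la même démarche que celle du papier : existence par la Proposition \ref{Substance1}, puis unicité par l'absurde en combinant l'inclusion $S\subset \Lambda$ avec l'axiome $(A3)$. Vous explicitez en outre que $S\neq e$ est nécessaire pour que « avoir $S$ en commun » contredise réellement $(A3)$ (lu comme intersection vide), point que le papier laisse implicite.
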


\begin{proof} L'existence d'une substance $S$ telle que $\mathcal{C}(e)\subset S\subset \cap_{t\in \R} E_t$, est assur\'ee par la Proposition \ref{Substance1}. Soit $t\in \R$ quelconque et $\Lambda\subset E_t$ une substance. Il s'agit de voir que $\Lambda=S$. En effet, supposons par l'absurde que $\Lambda\neq S$. Par la même Proposition \ref{Substance1}, on sait que $S\subset \Lambda$. Ainsi, les deux substances $S$ et $\Lambda$ ont toutes les deux $S$ en commun. Or, ceci est exclu par l'axiome $(A3)$, ce qui donne $\Lambda=S$. D'o\`u l'unicité de la substance $S$.

\end{proof}

\vskip5mm
On montre maintenant l'indivisibilit\'e de la substance. 

\begin{Prop} \label{Substance} Supposons que les axiomes $(A1)$, $(A2)$ et $(A3)$ sont satisfaits. Alors, l'unique substance $S$ satisfait :

$(1)$ pour tout $t\in \R$, $E_t=N_t\cup S $. Autrement dit, les êtres de l'existence sont ou bien en soi, et c'est l'unique substance S, ou bien en autre chose que soi et, dans ce cas, ce sont les \'el\'ements de $N_t$.  

$(2)$ $S$ est indivisible, c'est-\`a-dire, $S$ ne se d\'ecompose pas en deux (ou plusieurs) parties disjointes. 
\end{Prop}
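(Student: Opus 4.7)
The plan is to deduce both parts from the uniqueness of the substance already established in Proposition \ref{Substance2} combined with the precise definition of $N_t$ given at the start of Section \ref{S2}; no further appeal to Knaster--Tarski is needed, since the existence and uniqueness have already done the heavy lifting.

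For part $(1)$, I would first observe that the inclusion $N_t \cup S \subset E_t$ is immediate, because $N_t \subset E_t$ by the very definition of $N_t$ and $S \subset \cap_{t\in \R} E_t$ by Proposition \ref{Substance1}. For the reverse inclusion, I would pick an arbitrary $x \in E_t$ and consider its cause $\mathcal{C}(\{x\}) \in \mathcal{P}(E_t) \setminus \{\emptyset\}$ supplied by axiom $(A1)$. Two cases arise, according to the dichotomy encoded in the definition of $N_t$: either $\mathcal{C}(\{x\})$ strictly precedes $\{x\}$ in time, in which case $x \in N_t$ by definition; or it does not, and then axiom $(A1)$ (the effect cannot precede its cause) leaves only the possibility $\mathcal{C}(\{x\}) = \{x\}$, so that $\{x\}$ is a substance, and Proposition \ref{Substance2} forces $\{x\} = S$, whence $x \in S$.

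For part $(2)$, I would argue by contradiction, assuming that $S$ decomposes as $S = A \cup B$ with $A, B$ non-empty, disjoint, and each an \emph{independent} part of $S$, which I interpret to mean that each of $A$ and $B$ is itself a substance. Then $\mathcal{C}(A) = A$ and $\mathcal{C}(B) = B$, so $A$ and $B$ are both substances; by the uniqueness of the substance (Proposition \ref{Substance2}), $A = S$ and $B = S$. Combined with $A \cap B = \emptyset$, this forces $S = \emptyset = e$, contradicting $S \neq e$ from Proposition \ref{Substance1}. The main subtlety I expect will not be mathematical but semantic: one has to fix precisely what ``parties indépendantes'' means in this context. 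Once it is taken to mean that each part must itself be a substance (i.e.\ a fixed point of $\mathcal{C}$), indivisibility is a direct consequence of uniqueness; the role of axiom $(A3)$ in the argument is only indirect, insofar as it was already used in Proposition \ref{Substance2} to secure that uniqueness.
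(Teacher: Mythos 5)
Your proposal is correct in substance and rests on the same pillars as the paper's proof (uniqueness from Proposition \ref{Substance2}, minimality from Proposition \ref{Substance1}), but it diverges in how part $(2)$ is organized. For $(1)$ the paper contents itself with ``$S$ est l'unique chose qui est cause de soi, il est donc clair que $E_t=N_t\cup S$''; your element-by-element dichotomy is more explicit and therefore an improvement, though note a small wrinkle: the paper itself remarks (in the section on atoms) that $(A1)$ does not exclude a cause simultaneous with its effect, so ``$\mathcal{C}(\lbrace x\rbrace)$ does not strictly precede $\lbrace x\rbrace$'' does not by itself force $\mathcal{C}(\lbrace x\rbrace)=\lbrace x\rbrace$; this gap is present in the paper's one-line argument too, so you are not worse off, but it deserves a word. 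For $(2)$ the paper follows Spinoza's Proposition XIII and splits into two cases: if \emph{at least one} of $S_1,S_2$ is a substance, it must contain $S$ by the minimality clause of Proposition \ref{Substance1}, hence equals $S$; if \emph{neither} is a substance, the paper falls back on a purely philosophical argument (``$S$ perdrait sa nature de substance et cesserait d'être''), which is not mathematical. You instead resolve the ambiguity of ``parties indépendantes'' by \emph{defining} it to mean that both parts are substances, and then conclude by uniqueness; this buys you a fully mathematical proof of a narrower statement, at the price of silently discarding the paper's second case. Your explicit flagging of this semantic choice is fair, since the literal statement (``$S$ ne se décompose pas en deux parties disjointes'') is false for any set with at least two elements and some interpretation is unavoidable; just be aware that the paper's intended reading is broader than the one you prove.
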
 

\begin{proof} L'existence et l'unicit\'e de la Substance $S$ ainsi que son ind\'ependance par rapport au temps, est assur\'ee par la Proposition \ref{Substance2}.

$(1)$ pour tout $t\in \R$, $S$ est l'unique chose qui est cause de soi. Il est donc clair que pour tout $t\in \R$, $E_t=N_t\cup S $. 

$(2)$ Supposons que $S$ se divise en $S_1$ et $S_2$. On va procéder ici en suivant la démarche de la Proposition XIII de {\it l'Ethique}. Deux cas se présentent : 

\vskip5mm
{\it Cas 1 :}  Soit $S_1$, soit $S_2$ garde la même nature que $S$. Autrement dit, soit $S_1$ est une substance, soit $S_2$ est une substance. Si, par exemple, $S_1$ est une substance, alors par la Proposition \ref{Substance1}, elle devrait contenir $S$, autrement dit $S\subset S_1$. Or, $S_1\subset S$ et donc $S_1=S$. Le même raisonnement s'applique si $S_2$ est une substance.

{\it Cas 2 :} Ni $S_1$ ni $S_2$ n'est une substance. Dans ce cas, comme le précise Spinoza dans la Proposition XIII de {\it l'Ethique}, la substance $S$ perdrait sa nature de substance et cesserait d'être ce qui contredit le fait d'être cause de soit, autrement dit, contredit le fait qu'il appartiennent à sa nature d'exister.

\end{proof}

\vskip5mm
On appellera "Cause premi\`ere", la cause contenue dans toutes les causes.

\begin{Cor} \label{Cor1} La substance $S$ enveloppe la cause première de toute autre chose et elle est éternelle.
\end{Cor}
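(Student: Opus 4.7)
Je procéderais en deux temps, en m'appuyant essentiellement sur les propositions déjà établies dans cette note. D'abord, j'identifierais la \emph{cause première} à un objet concret du modèle. Par définition, la cause première est la cause contenue dans toutes les causes, autrement dit dans $\bigcap_{A \in \mathcal{P}(E_t)} \mathcal{C}(A)$. D'une part, par la Proposition \ref{V}, on a $\mathcal{C}(e) \subset \mathcal{C}(A)$ pour toute partie non vide $A$ de $E_t$ et pour tout $t \in \R$, donc $\mathcal{C}(e)$ est incluse dans cette intersection. D'autre part, $\mathcal{C}(e)$ est elle-même une cause (celle de $e$), de sorte que l'intersection en question est incluse dans $\mathcal{C}(e)$. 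Ceci identifie la cause première à $\mathcal{C}(e)$.

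Ensuite, pour démontrer que $S$ enveloppe la cause première, il suffit d'invoquer la Proposition \ref{Substance1}, qui fournit directement l'inclusion $\mathcal{C}(e) \subset \mathcal{C}(S) = S$. Le premier point du corollaire en découle immédiatement. Pour établir l'éternité de $S$, je m'appuierais sur le fait, déjà obtenu dans la preuve de la Proposition \ref{Substance1}, que $S$ est indépendante du temps et satisfait $S \subset \bigcap_{t \in \R} E_t$. Autrement dit, $S$ existe avant tout instant $t \in \R$, ce qui est précisément le sens à donner à \emph{éternelle} dans le cadre de cette modélisation : une chose est éternelle si elle appartient à $E_t$ pour tout $t \in \R$.

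\textbf{Difficulté principale.} L'obstacle n'est pas véritablement mathématique, mais tient à la clarification des définitions : il faut s'assurer que l'identification de la \emph{cause première} avec $\mathcal{C}(e)$ est bien cohérente avec la définition énoncée juste avant le corollaire, et préciser formellement la notion d'\emph{éternité} dans le contexte du modèle. Une fois ces clarifications opérées, la preuve mathématique se réduit à une application directe des Propositions \ref{V} et \ref{Substance1}, sans aucun calcul non trivial.
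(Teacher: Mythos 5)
Votre démonstration est correcte et suit essentiellement la même démarche que celle de la note : identification de la cause première à $\mathcal{C}(e)$ via la Proposition \ref{V}, inclusion $\mathcal{C}(e)\subset \mathcal{C}(S)=S$ par la Proposition \ref{Substance1}, et éternité tirée de l'indépendance du temps ($S\subset \cap_{t\in\R}E_t$) et du fait que $S$ est cause de soi. Votre justification de l'inclusion réciproque (la cause première est contenue dans $\mathcal{C}(e)$ puisque $\mathcal{C}(e)$ est elle-même une cause) est un léger raffinement que la note se contente d'affirmer, mais cela ne change pas la nature de l'argument.
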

\begin{proof} Soit une chose $A\subset E_t$ pour $t\in \R$. Par la Proposition \ref{V}, on a $\mathcal{C}(e)\subset \mathcal{C}(A)$. Donc, $\mathcal{C}(e)$ est la cause première qui qui est contenue dans la substance $S$. Par ailleurs, $S$ est cause de soi et est ind\'ependante du temps elle est donc \'eternelle.
\end{proof}
En suivant la définition II de {\it l'Ethique} ci-dessous, on déduit aisément l'infinité de la substance $S$.  

\begin{Def} Une chose est dite finie en son genre quand elle peut être bornée par une autre chose de même nature. Par exemple, un corps est dit chose finie, parce que nous concevons toujours un corps plus grand ; de même, une pensée est bornée par une autre pensée ; mais le corps n'est pas borné par la pensée, ni la pensée par le corps. 
\end{Def}
\begin{Cor} \label{Cor2} L'unique substance $S$ est nécessairement infinie. 
\end{Cor}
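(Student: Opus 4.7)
Le plan est de raisonner directement à partir de la Définition~2 qui précède l'énoncé : une chose est finie en son genre lorsqu'elle peut être bornée par une autre chose de \emph{même nature}. Ainsi, pour montrer que $S$ est infinie, il suffit d'établir que $S$ ne peut être bornée par aucune autre chose de même nature, autrement dit par aucune autre substance.

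Je procéderais par l'absurde. Supposons que $S$ soit finie en son genre. Alors, par la Définition~2, il existerait une chose $\Lambda$ de même nature que $S$, distincte de $S$, qui borne $S$. Puisque « de même nature que $S$ » signifie ici « qui est aussi cause de soi », $\Lambda$ serait une substance au sens de la note, distincte de $S$. Or, la Proposition~\ref{Substance2} (sous les axiomes $(A1)$, $(A2)$ et $(A3)$) affirme précisément l'\emph{unicité} de la substance $S$ dans $E=\cup_{t\in\R}E_t$. L'existence d'une telle $\Lambda\neq S$ contredit donc directement cette unicité, et la seule issue est que $S$ ne soit bornée par aucune substance, c'est-à-dire que $S$ soit infinie.

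L'étape clé et unique difficulté conceptuelle, davantage qu'un obstacle technique, est l'identification entre « chose de même nature que $S$ » et « substance » au sens défini dans la Section~\ref{S2}. Une fois cette identification admise — elle est naturelle puisque la nature propre de $S$ est précisément d'être cause de soi — la preuve se réduit à une application immédiate de l'unicité déjà obtenue. Aucune construction nouvelle n'est requise, et le théorème de Knaster-Tarski n'intervient qu'indirectement via la Proposition~\ref{Substance2}.
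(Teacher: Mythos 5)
Votre démonstration est correcte et suit essentiellement le même chemin que celle de la note : l'infinité de $S$ découle de la Définition~2 parce que l'unicité de la substance (Proposition~\ref{Substance2}) exclut toute autre chose de même nature susceptible de borner $S$. Vous explicitez simplement le raisonnement par l'absurde et l'identification entre \og chose de même nature \fg{} et \og substance \fg{}, que le texte laisse implicites.
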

\begin{proof} Cela découle de la définition ci-dessus puisqu'il n'y a pas d'autre sustance de même nature que $S$ pour la contenir.
\end{proof}

\section{La substance est-elle un atome?}
Dans cette section, on se pose la question suivante : La substance $S$, qu'on sait indépendante du temps et contenue dans  $\cap_{t\in \R} E_t$, est elle un singleton ou non? 

\vskip5mm
On va discuter une condition possible qui permet de répondre à cette questions. On rappel la définition d'un atome en théorie des ensembles.
\begin{Def} \label{atome} En théorie des ensembles, un ensemble $A$ est dit un {\it atome}, si $A\neq \emptyset$ et $A$ n'a pas d'autre minorant que $\emptyset$ et lui
même (c'est-à-dire que les seuls ensembles $B$ vérifiant $B\subset A$ sont $\emptyset$ et $A$).
\end{Def}

Notons tout d'abord deux choses : 
\vskip5mm
$(1)$ l'unique substance $S$ constitue un infra-monde où le temps n'existe pas. Cependant, les axiomes $(A1)$ et $(A2)$ restent toujours  valide. La chose à noter c'est que l'axiome $(A1)$ exclue uniquement le fait qu'un effet puisse précéder dans le temps sa cause mais n'exclue pas, par exemple, qu'un effet puisse exister en même temps que sa cause. Ainsi, si donc $S$ contient une partie non vide $A\subsetneqq S$, on peut toujours parler de sa cause $\mathcal{C}(A)\neq A$ qui est contenue dans $S=\mathcal{C}(S)$ et cela indépendamment du temps. 
\vskip5mm
$(2)$ Les trois axiomes $(A1)$, $(A2)$ et $(A3)$ ne permettent pas de démontrer mathématiquement parlant que la substance $S$ est un atome ou non. Pour pouvoir affirmer que $S$ soit un atome, il faut supposer une condition en plus qu'on notera $(P)$. On n'affirme pas ici que cette propriété $(P)$ est vrai, car l'intuition ne semble ni l'affirmer ni l'infirmer, mais nous allons en revanche montrer que la validité la propriété $(P)$ entraine le fait que $S$ soit un atome et plus précisément que $S=\mathcal{C}(e)$.
\vskip5mm
Voici donc la propriété $(P)$ en question :
\vskip5mm

\noindent $(P)$ Pour tout $t \in \R$ et pour tout $A\subset E_t$, on a  $$A\subset \mathcal{C}(A)\Longrightarrow (A=e \textnormal{ ou } A=\mathcal{C}(A)).$$

\vskip5mm
Ce qui est certain c'est que, pour les choses singulières $A$ de la nature $N_t$ ($t\in \R$), c'est-à-dire, pour les choses qui n'ont pas toujours existé et qui ont besoin que leur causes les précèdent strictement dans le temps, on ne peut jamais avoir $A\subset \mathcal{C}(A)$ sauf si $A=\emptyset$, car par l'axiome $(A1)$ une chose ne peut jamais précédé sa cause. Or si $A\subset \mathcal{C}(A)$, cela suppose que $A$ fait partie de sa propre cause. Ceci est impossible pour les choses singulières qui n'ont pas toujours existé. 

\begin{Prop} \label{Substance4} Supposons que les axiomes $(A1)$, $(A2)$ et $(A3)$ sont satisfaits. Alors, on a que $(i)\Longrightarrow (ii)$.

$(i)$ La propriété $(P)$ est vérifiée.

$(ii)$ L'unique substance $S$ est un atome (au sens de la définition \ref{atome}), c'est-à-dire qu'il existe $s\in \cap_{t\in \R} E_t$ tel que $S=\lbrace s\rbrace$ et on a que $S=\mathcal{C}(e)=\lbrace s\rbrace$.

\end{Prop}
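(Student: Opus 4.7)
The plan is to first use property $(P)$ to identify $\mathcal{C}(e)$ with $S$, and then exploit this identification to force atomicity. For the first step, I would apply $(P)$ to the subset $A := \mathcal{C}(e)$. Since $e = \emptyset \subset \mathcal{C}(e)$, the monotonicity of $\mathcal{C}$ granted by axiom $(A2)$ yields $\mathcal{C}(e) \subset \mathcal{C}(\mathcal{C}(e))$, so $(P)$ imposes either $\mathcal{C}(e) = e$ or $\mathcal{C}(e) = \mathcal{C}(\mathcal{C}(e))$. The first alternative is excluded by Proposition \ref{IV}, so $\mathcal{C}(e)$ is a fixed point of $\mathcal{C}$, i.e.\ a substance. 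By the uniqueness established in Proposition \ref{Substance2}, this forces $\mathcal{C}(e) = S$.

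The second step is the key observation that $\mathcal{C}(B) = S$ for every $B \subset S$. Indeed, from $e \subset B \subset S$ and monotonicity of $\mathcal{C}$ we get $S = \mathcal{C}(e) \subset \mathcal{C}(B) \subset \mathcal{C}(S) = S$, forcing equality throughout. Thus the whole ``lower cone'' of $S$ in $\mathcal{P}(E_t)$ collapses onto $S$ under $\mathcal{C}$.

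Atomicity then drops out immediately. Suppose, for contradiction, that there were $B \subset S$ with $B \neq \emptyset$ and $B \neq S$. By the previous step $\mathcal{C}(B) = S$, hence $B \subsetneq \mathcal{C}(B)$; in particular $B \subset \mathcal{C}(B)$, so property $(P)$ would yield $B = e = \emptyset$ or $B = \mathcal{C}(B) = S$, contradicting both assumptions. Hence in the sense of Definition \ref{atome}, $S$ is an atom; writing $S = \lbrace s \rbrace$ we conclude $S = \mathcal{C}(e) = \lbrace s \rbrace$, and $s \in \cap_{t \in \R} E_t$ follows from the inclusion $S \subset \cap_{t \in \R} E_t$ already recorded in Proposition \ref{Substance1}.

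I do not anticipate a substantive obstacle. The only conceptual move is recognizing that $(P)$ must first be applied to $\mathcal{C}(e)$ in order to collapse it onto $S$; once this identification is in hand, the equality $\mathcal{C}(B) = S$ for all $B \subset S$ turns every proper non-empty subset of $S$ into a post-fixed point of $\mathcal{C}$ that violates $(P)$, and the conclusion is essentially forced.
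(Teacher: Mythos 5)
Your argument is correct and follows essentially the same route as the paper's own proof: first apply $(P)$ to $\mathcal{C}(e)$ (using $e\subset\mathcal{C}(e)$, monotonicity from $(A2)$, Proposition \ref{IV} to exclude $\mathcal{C}(e)=e$, and Proposition \ref{Substance2} to get $\mathcal{C}(e)=S$), then show $\mathcal{C}(A)=S$ for every $A\subset S$ by squeezing $S=\mathcal{C}(e)\subset\mathcal{C}(A)\subset\mathcal{C}(S)=S$, and finally invoke $(P)$ once more to conclude atomicity. The only cosmetic difference is that you phrase the last step as a proof by contradiction where the paper argues directly.
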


\begin{proof} Montrons que $(i)\Longrightarrow (ii)$. On sait que $e\subset \mathcal{C}(e)$. Par l'axiome $(A2)$, c'est-à-dire par la croissance de la loi de causalité, on sais que  $\mathcal{C}(e)\subset \mathcal{C}(\mathcal{C}(e))$. Si donc la propriété $(P)$ est vrai, alors on obtient que ou bien $\mathcal{C}(e)=e$ ou bien que $\mathcal{C}(e)$ est une substance. Comme la Proposition \ref{symp1} exclu le premier cas, on a alors que $S=\mathcal{C}(e)$ (car $S$ est l'unique substance par Proposition \ref{Substance2}). Pour voir que $S$ est un atome, soit $A\subset S$ alors on sait que $\mathcal{C}(e)\subset \mathcal{C}(A)\subset \mathcal{C}(S)=S$. D'où $\mathcal{C}(A)=S$. Il s'ensuit que $A\subset S=\mathcal{C}(A)$. Ceci entraine par la propriété $(P)$ que $A=e$ ou $A=S$, c'est-à-dire que $S$ est un atome.
\end{proof}

\section{Conclusion.}

Comme je l'ai mentionné à la fin de l'introduction, la démarche qui a été suivie dans cette note consistait en le fait de partir de la diversité des êtres pour aboutir, via un modèle mathématique, à l'unicité de la substance. Il a été prouvé par ailleurs que la substance est indivisible, infinie, éternelle et qu'elle est cause première de toute chose. Il serait donc bien intéressant maintenant de faire le chemin inverse, c'est-à-dire, de partir de la substance elle-même dont on a prouvé l'existence et l'unicité pour comprendre dans quelle mesure le modèle proposé ici permet d'expliquer le cheminement causal qui part de cette substance et aboutit à la diversité des êtres. Ceci permettrait sans doute de voir si la suite des affirmations de Spinoza dans {\it l'Ethique} reste valide. Mais cette t\^{a}che n'a pas été abordée dans cette note pour éviter toute spéculation non mathématique. Notons juste que le formalisme mathématique qui a été utilisé ici est possible dans un cadre plus général. En effet, dès que l'on suppose qu'il existe un ordre $\leq$ sur le monde de l'existant $E$ faisant de ce dernier un treillis complet et qu'il existe une loi de causalité de $\mathcal{C}: (E, \leq)\longrightarrow (E, \leq)$ qui soit croissante pour cet ordre, alors grâce au théorème de Knaster-Tarski, il existe nécessairement une substance dans $E$ et même que l'ensemble des substances est lui même un treillis complet. L'unicité de la substance s'obtient ensuite en rajoutant l'axiome suivant :  {\it "deux substances différentes ne sont pas comparables"}. Cela laisse donc la voie libre à d'autre modèle possible. Ceci étant dit, j'ai choisi de garder ici le modèle simple qu'est $(\mathcal{P}(E_t), \subset)$ car ce dernier ma semblé être le plus adéquat et le plus concret à l'heure actuelle.
\vskip5mm  

Je vais m'arrêter sur cette conclusion que les premières affirmations de Spinoza trouvées dans {\it l'Ethique} (je parle uniquement de ce que j'ai abordé ici) ont été confirmées et démontrées dans cette note par une approche rigoureusement mathématique. On peut donc probablement encore dire comme l'a affirmé Antonio Damasion dans l'un de ses livres (voir \cite{D}) que : Spinoza avait raison.

\chapter[Les erreurs de Spinoza]{Les erreurs de Spinoza dans ses démonstrations de la Proposition XI de l’Ethique I.}
\noindent {\bf Abstract :} We prove that the three demonstrations of Proposition XI of the {\it Ethics I} proposed by Spinoza are false.

\vskip5mm

\noindent {\bf Résumé :} On prouve que les trois démonstrations de la Proposition XI de l'{\it Ethique I} proposées par Spinoza sont fausses.

\section{Introduction.}
Dans la précédent partie j’ai proposé un modèle mathématique simple, basé sur la théorie des ensembles et le théorème de Knaster-Tarski pour tenter d’approcher la pensée développée par Spinoza dans l’Ethique I. J’ai apporté ensuite des preuves rigoureusement mathématiques  à quelques propositions de l’Ethique I notamment à deux des plus importantes qui sont la Propositions XI et la Proposition XIV. J’avais mentionné dans la même partie que certaines preuves données par Spinoza étaient erronées. D'autres auteurs comme Georges Boole dans \cite{Bo} et Daniel Parrochia dans \cite{P} avaient par ailleurs déjà mentionné quelques exemples de difficultés que l’on peut trouver dans l’Ethique. Dans la présente note, je vais commenter quelques lacunes qu'on trouve dans l'Ethique et prouver ensuite que les trois démonstrations de la Propositions XI de l’Ethique I  proposées par Spinoza sont fausses.

\vskip5mm 

Avant de commencer les détails de mon argumentaire,  je tiens à préciser que la présente note n’a nullement la prétention de mettre en défaut la pensée de l’auteur de l’Ethique. Je soutiens, au contraire, que l’œuvre de l’Ethique soit une mine  d’idées et de concepts. Mais comme chacun le sait, toute mine d’or à sa proportion en terre et en or  et ce n’est pas parce qu’on tombe plus facilement sur de la terre qu’on ne finit  pas par  trouver de l’or. Je pense avoir suffisamment prouvé dans la première partie que même si les démonstrations de Spinoza contiennent des erreurs, il n’en demeure pas moins que son intuition reste très profonde. La première leçon que tout lecteur de Spinoza  a dû tirer de l’Ethique c’est justement  de ne jamais suivre une doctrine avant de  l’avoir soigneusement inspecté et de l’avoir rigoureusement soumis à l’examen de la preuve. C’est ce que Spinoza lui-même a fait vis-à-vis de la philosophie de Descartes. Enfin, il est important de noter que l’œuvre de l’Ethique date de 1661, une époque à laquelle il n’existait ni la logique de Georges Boole (1854), ni la théorie des ensembles de George Cantor (1874) ni les théorèmes d'incomplétude de Kurt Gödel (1931). 

\section{Le point de départ.}\label{S0}
Pour Spinoza, le point de départ c’est que dans la nature il n y a que des substances et leur modes. C’est un fait admis de manière axiomatique et donc non démontrable, basé sur l’axiome 1 et  les définitions 3 et 5 de l’Ethique. En effet, dans sa démonstration de la Proposition IV, Spinoza dit clairement : « Toutes les choses qui sont, sont ou bien en soi, ou bien en autre chose (selon l’axiome 1), c’est-à-dire (selon les définitions 3 et 5) que rien n’est donné hors de l’entendement, à part les substances et leurs affections». On se demande alors qu’est-ce que Spinoza essaye de démontrer dans la Proposition XI  vu qu’il suppose axiomatiquement l’existence de substances dans la nature. Le souci pour Spinoza dans la Proposition XI est de prouver qu’il en existe une qui admet une infinité d’attributs. Il tentera ensuite de prouver dans la proposition XIV que cette substance  est unique. Cependant, on verra  dans ce qui suit que les démonstrations  de la Proposition XI fournis  par Spinoza sont en réalité fausses. 
\subsection{\`A la nature de la substance, il appartient d’exister.}

Afin de prouver plus loin que les démonstrations de la Proposition XI proposées par Spinoza sont erronées, je vais revenir sur les propositions antérieurs à celle-ci et leur donner leur sens exacts.  On verra ainsi comment Spinoza lui-même tombe dans le piège de ces propres mots. Revenons donc sur la Proposition VII que Spinoza utilise dans sa première démonstration de la Proposition XI.
\vskip5mm   
PROPOSITION VII : {\it À la nature de la substance, il appartient d’exister.}
\vskip5mm
Comme chacun le sait, Spinoza ne veut évidemment  pas dire par cette proposition  que dans la nature il existe nécessairement une  substance. Car comme je l’ai précisé plus haut, l’existence de substance(s) est un fait axiomatique pour Spinoza. Le vrai sens de la Proposition VII c’est que : lorsqu’une substance est donné dans la nature, il est inutile de chercher d’où provient-elle. Car elle aura tout simplement toujours existé parce qu’elle est cause de soi et donc que son essence enveloppe son existence c’est-à-dire qu’il appartient tout simplement à sa nature d’exister. Autrement dit, qu’elle n’a ni surgit à partir du {\it "Rien"} ni a  été produite par une autre substance. Mais ce qu’il faut bien noter ici et ce à quoi il faut faire attention c’est qu’il est question dans  la Proposition VII de vrai substance. Autrement dit, si une substance selon la définition III de l'Ethique existe réellement dans la nature, alors il appartient à sa nature d’exister. Car, ce n’est pas parce qu’on définit une chose qu’elle va nécessairement exister dans la nature. Il s’en suit donc, que la Proposition VII s’applique uniquement aux substances qui existent réellement dans la nature et non pas aux substances imaginé par l’esprit.  Pour éviter le piège dans lequel Spinoza lui-même va tomber  (les détails seront donnés plus loin), je vais reformuler la Proposition VII pour lui faire dire ce qu’elle veut vraiment  dire : 
\vskip5mm
PROPOSITION VII BIS: {\it Si une substance est donnée dans la nature (et non imaginée par l’esprit), alors à la nature de cette substance, il appartient d’exister.}
\vskip5mm
Les termes « si » et « alors » sont ici très important comme nous le verrons dans ce qui suit.
Notons que ce qu’on vient de faire remarquer concerne toutes les propositions de l’Ethique qui font référence aux substances. A chaque fois il est question de substance supposée existante dans la nature et non pas imaginée inexistante comme il est question dans la démonstration 1. de la Proposition XI de l'Ethique I (voir plus loin). Ainsi, par exemple la Proposition II de l’Ethique : « Deux substances ayant des attributs différents n’ont rien de commun entre elles » doit être comprise comme suit :
\vskip5mm
Proposition II Bis : « Si deux substances existent dans la nature et ont des attributs différents, alors  elle n’ont rien de commun entre elles» 
\vskip5mm
La aussi il est clair que les termes « si » et « alors » sont incontournables puisque Spinoza sait pertinemment qu'à la fin, il y aura qu'une seule substance.
\section{Les erreurs de Spinoza.}
Voici la Proposition XI de l’Ethique et la première démonstration donnée par Spinoza.
\vskip5mm

PROPOSITION XI : Dieu, autrement dit une substance consistant en une infinité d’attributs, dont chacun exprime une essence éternelle et infinie, existe nécessairement. 
\vskip5mm
AXIOME VII. – TOUT CE QUI PEUT ÊTRE CONÇU COMME NON EXISTANT, SON ESSENCE N’ENVELOPPE PAS L’EXISTENCE.
\vskip5mm
DÉMONSTRATION 1 : Si vous niez cela, concevez, s’il est possible, que Dieu n’existe pas. Donc (selon l’axiome 7) son essence n’enveloppe pas l’existence. Or (selon la proposition 7) cela est absurde : donc Dieu existe nécessairement. C.Q.F.D.
\subsection{Première preuve de l’erreur dans la démonstration 1.}\label{S1}
L’erreur commise par Spinoza est contenue dans la dernière partie de sa preuve. En effet, comme je l’ai fais savoir plus haut, la Proposition VII ne peut s’appliquer qu’aux substances supposées existantes réellement dans la nature et non aux substances imaginées non existantes. Or Spinoza commence sa démonstration en supposons que la substance Dieu (selon sa définition) est non existante et dans la mesure où c’est une substance non existante elle est supposé être une substance imaginée (Autrement dit, une substance par le vocabulaire et non pour de vrai). On ne peut donc en aucun cas lui appliquer la Proposition VII qui traite uniquement les substances supposées existantes. En fait, la démonstration 1 de Spinoza se résume à ceci : « Supposons que Dieux n’existe pas. Or cela est absurde : donc Dieu existe nécessairement ». Ce qui n'est évidemment pas une démonstration. Même en supposons par l'absurde que Dieu n'existe pas, autrement dit que la substance Dieu est imaginaire, Spinoza utilise quand même la Proposition VII, or il est évident qu'il ne peut pas appartenir à la nature d'une substance non existante, d'exister. 
\subsection{Deuxième preuve de l’erreur dans la démonstration 1.}\label{S3.2}
Pour voir d’une autre manière que la démonstration 1 donnée par Spinoza est erronée, je vais imaginer une autre substance que j’appellerais Dieu Bis et que je définie comme suis :
\vskip5mm
PAR DIEU BIS, J’ENTENDS UNE SUBSTANCE CONSISTANT EN EXACTEMENT {\bf DEUX D’ATTRIBUTS NI PLUS NI MOINS}, DONT CHACUN EXPRIME UNE ESSENCE ÉTERNELLE ET INFINIE.  
\vskip5mm
Le lecteur pourra facilement se convaincre, qu’en remplaçant "Dieu" par "Dieu Bis" dans la démonstration 1, il tombera sur la même conclusion c’est-à-dire que Dieu Bis existe nécessairement. On peut ensuite répéter ce processus avec une substance à exactement un attribut, à trois attributs…, à $n$ attributs etc et à chaque fois une telle substance existera nécessairement. 
Ce qui exclu de fait toute unicité de la substance et contredit la Proposition XIV de l'ethique. 
\vskip5mm
Hormis l'invalidité de la démonstration 1 pour les raisons données juste avant et aussi dans la Section \ref{S1}, la notion d'attribut pose de sérieux problèmes notamment des problèmes de rigueurs. En effet, il est impossible dans le système de Spinoza de décider sur la cardinalité des attributs des substances. En d'autres termes, pour qu’on puisse espérer avoir des informations sur le nombre ou la cardinalité des attributs d'une substance (encore faut-il que les attributs existent réellement), il est nécessaire que ces informations proviennent de quelques part et en l'occurrence ici, de ce qui précède la Proposition XI de l'Ethique I. Or, les seules endroits où Spinoza pense expliquer la provenance du nombre des attributs d'une substance se trouve dans la définition IV qui est utilisée ensuite dans la Proposition IX. Mais, d'une part la définition IV ne dit rien sur le nombre des attributs d'une substance et d'autre part, Spinoza lui-même explique par ailleurs que l'entendement ne perçoit que deux attributs de la substance (l'étendu et la chose pensante). Je ne vois donc pas comment l'entendement pourrait {\bf percevoir qu'il y ait plus d'attributs que ce qu'il peut percevoir}. De cela on déduit clairement, que si la démonstration 1 de Spinoza était juste, rien ne pourrait réfuter l'existence de Dieu Bis, au contraire l'entendement serait plus satisfait avec uniquement deux attributs puisqu'il n'en perçoit pas d'autres.

Rien donc ne peut, dans le système de Spinoza, décider de manière rigoureuse sur la cardinalité des attributs d’une substance (sans évoquer ici le fait qu’il y ait en plus plusieurs type de cardinalité). L'infinité des attributs de la substance (si cela devait avoir un sens) doit à mon sens être considéré comme un axiome dans le système de Spinoza et ne peut ainsi se déduire d’une démonstration. Or Spinoza, comme je l'avais expliqué, admet l'existence de substance(s) de manière axiomatique. Il s'ensuit donc que la Proposition XI de l'Ethique est en fait un axiome dans le système Spinosiste. Quant à l'approche que j'ai proposée dans la première partie, j'ai clairement prouvé de manière mathématique l'existence et l'unicité de la substance (que j'ai défini uniquement comme étant une chose qui est cause de soi), et cela à partir uniquement de deux axiomes (voir un troisième pour l'unicité). 
\subsection{Deux erreurs dans la deuxième démonstration de Spinoza.}
On trouve dans la deuxième démonstration de Spinoza pour la Proposition XI deux types d'erreurs que je vais mettre à jour. 
La première erreur se trouve dans le passage suivant de sa démonstration : 
\vskip5mm
« Si donc nulle raison ni cause ne peut être donnée qui empêche que Dieu n'existe, ou qui lui enlève l'existence, il faut conclure pleinement qu'il existe nécessairement. »
\vskip5mm
On sait depuis Kurt Gödel, que dans une théorie donné, il existe toujours des propositions qui ne sont ni démontrable ni réfutable. L'axiome du choix est  un des plus célèbre  exemple qui ne peut ni être  démontré  ni être réfuté  à partir  de  la théorie  des ensembles  de  Zermelo-Fraenkel . Le passage ci-dessus (de Spinoza) est donc un faux argument car ce n'est pas parce qu'on ne peut pas prouver  "l'absence de Dieu" dans l'axiomatique de Spinoza qu'il existe nécessairement. Je pense par ailleurs que la Proposition XI telle qu'elle est formulée dans l’Ethique I est indécidable dans le système axiomatique de Spinoza.
\vskip5mm
La deuxième erreur est de même nature que ce que j'ai développé dans la section \ref{S1}. Reprenons le passage suivant de la deuxième démonstration de Spinoza : 
\vskip5mm
« D'autre part, une substance qui serait d'une autre nature ne pourrait avoir rien de commun avec Dieu (selon la proposition 2), et par conséquent ne pourrait ni poser son existence ni l'enlever. »
\vskip5mm
Là aussi, Spinoza utilise la Proposition II de l’Ethique pour dire qu’une autre substance que la substance Dieu n’aurait rien en commun avec Dieu. Seulement, comme je l’avais expliqué dans la la section \ref{S0}, la Proposition II de l’Ethique  traite les substances qui sont  supposées toutes les deux existantes. Or ici, le Dieu de Spinoza n’est pas supposé existant (c'est ce qu'il souhaite plutôt montrer) il est bien au contraire supposé, par l’absurde, être non existant. D’où l’erreur.
\subsection{L’erreur dans la troisième démonstration de Spinoza.}
L’argument de Spinoza dans  sa troisième démonstration commence comme suit : 
\vskip5mm
« Ne pouvoir exister, c’est impuissance, et au contraire pouvoir exister, c’est puissance (comme il est connu de soi). Si donc ce qui existe déjà nécessairement, ce ne sont que des êtres finis, c’est donc que des êtres finis sont plus puissants que l’Être absolument infini : or cela (comme il est connu de soi) est absurde...».
\vskip5mm
Si on suit cet argument de Spinoza, en définissant des demi-dieux intermédiaires en puissance entre les humains et le Dieu de Spinoza ou bien en considérant le "Dieu Bis" que j'ai défini dans la Section \ref{S3.2}, on montrera par-là que  "Dieu Bis" existe nécessairement puisqu'il est plus puissant que les humains qui existent déjà nécessairement. D'où la faille. Je ne vais pas détailler davantage cette partie, car cette troisième preuve de Spinoza repose sur l’{\it argument ontologique} qui a déjà été réfuté à juste titre par plusieurs penseurs dont les philosophes Emmanuel Kant et Bertrand Russell.

\bibliographystyle{amsplain}

\end{document}